\newcommand{\trialX}{\mathcal{X}}
\newcommand{\testY}{\mathcal{Y}}
\newcommand{\trialXd}{\mathcal{X}_{\delta}}
\newcommand{\testYd}{\mathcal{Y}_{\delta}}
\newcommand{\parset}{\mathcal{D}}
\newcommand\restr[2]{\ensuremath{\left.#1\right|_{#2}}}
\newtheorem{prop}{Proposition}[section]
\newenvironment{proof}{\paragraph{Proof:}}{\hfill$\square$}
\title{A space-time certified reduced basis method for quasilinear parabolic partial differential equations}
\author[1]{Michael Hinze\thanks{hinze@uni-koblenz.de}}
\author[1]{Denis Korolev\thanks{korolev@uni-koblenz.de}}
\date{}
\affil[]{University of Koblenz-Landau, Mathematical Institute}
\begin{document}

\maketitle

\begin{abstract}
 In this paper, we propose a certified reduced basis  (RB) method for \textit{quasilinear} parabolic problems. The method is based on \textit{a space-time} variational formulation. We provide a residual-based \textit{a-posteriori} error bound for a space-time formulation and the corresponding efficiently computable estimator for the \textit{certification} of the method. We use the Empirical Interpolation method (EIM) to guarantee the efficient \textit{offline-online} computational procedure. The error of the EIM method is then rigorously incorporated into the certification procedure. The  Petrov-Galerkin finite element discretization allows to benefit from the Crank-Nicolson interpretation of the discrete problem and to use a POD-Greedy approach to construct the reduced-basis spaces of small dimensions. It computes the reduced basis solution in a time-marching framework while the  RB approximation error in a space-time norm is controlled by the estimator. Therefore we combine a POD-Greedy approximation with a space-time Galerkin method. 
\end{abstract}

\section{Introduction}
\label{intro}
The certified reduced basis method is known as an efficient method for model order reduction of parametrized partial differential equations (see, e.g. \cite{haasdonk2017reduced,quarteroni2015reduced} , where also the terminology used in the present article is well-explained). The efficiency comes from the use of the Greedy search algorithm in the basis construction for the numerical approximation of the problem and a-posteriori control of the approximation error. The later serves not only for rigorous certification of the method, but also as the selection criterion in the Greedy selection process. This process provides incrementally better bases for the approximation and further significant speed-up in multi-query numerical simulations - relevant, for example, in the design, optimization and control contexts, through the use of RB surrogate models.

The reduced basis method was successfully applied to linear \cite{grepl2005posteriori,steih2012space,urban2014improved} and nonlinear \cite{grepl2012certified,yano2014space,yano2014Boussinesq} parabolic problems, where the spatial differential operator is coercive \cite{grepl2012certified,steih2012space} or inf-sup stable \cite{urban2014improved,yano2014space,yano2014Boussinesq}.  In general, there are two approaches for the reduced basis methods applied to unsteady problems: (1) first discretize, then estimate and reduce, (2) first estimate, then discretize and reduce. The approach (1) \cite{grepl2005posteriori,haasdonk2008reduced,grepl2012certified} is based on a time-marching problem in the offline phase and the error bounds or indicators are then stem from the structure of the discrete problem. The POD-Greedy procedure \cite{haasdonk2008reduced} is commonly used to construct the reduced-basis spaces and the Empirical Interpolation Method (EIM) \cite{barrault2004empirical,grepl2007efficient,maday2009general} is used to treat non-affine and nonlinear problems \cite{grepl2012certified}. The approach (2) starts from a weak space-time variational formulation (see, e.g. \cite{steih2012space,urban2014improved,yano2014space,yano2014Boussinesq}). The error bounds are then derived in the appropriate Bochner spaces with respect to the natural space-time norms. In this approach time is treated as a variable and thus it resembles the reduced-basis setting for elliptic problems \cite{rozza2007reduced}. The reduced-basis space is consequently constructed in the offline phase out of the space-time snapshots, obtained, for example, with the related Petrov-Galerkin discrete scheme.  However, the appropriate choice of the discrete spaces in the Petrov-Galerkin scheme results in a time-marching interpretation (see, e.g. \cite{urban2014improved,yano2014space}) of the discrete problem. In this way, the time-marching procedure allows to use the standard POD-Greedy approximation and to treat time as the parameter, which leads to the reduced-basis time-marching problem, but the error certification is accomplished  with the natural space-time norm error bound. We refer to \cite{glas2017two} for the detailed overview and comparison of these two approaches in the context of linear parabolic equations.

In this paper we treat quasilinear parabolic problems with the approach (2). We propose our $L^{2}(0,T;V)$ a-posteriori error bound, based on the space-time variational formulation of quasilinear parabolic PDEs with strongly monotone differential operators. We provide a Crank-Nicolson time-stepping interpretation of the discrete Petrov-Galerkin problem and consequently use the POD-Greedy procedure to construct the reduced-basis spaces of small dimension. 

A time-marching interpretation also allows to treat the nonlinearity with the EIM in order to have offline-online decomposition for our problem available. Moreover, the parameter separability in time, achieved with the EIM, leads to a significant speed-up factor in the computational procedure. The error of the EIM is then also incorporated in the error bound. 

Our work is motivated by the structure of the magnetoquasistatic approximation of Maxwell's equations (the eddy-current model). This equation finds its place in important applications, such as the computation of magnetic fields in the presence of eddy currents in electrical machines \cite{salon1995finite}. The development of fast and accurate simulation methods for such problems is of great importance in the optimization and design of electrical machines and other devices \cite{alla2019certified,ion2018robust}. Therefore there is a demand for reduced order models (see, e.g. \cite{Stykel2017}) of this quasilinear PDE, which can be further used as surrogates in the optimization procedure. Our approach is applicable to the 2-D magnetoquasistatic problem as well, and we present according numerical results.

\section{Space-Time Truth Solution}
\label{sec:2}
In this section we consider a space-time variational formulation of quasilinear parabolic partial differential equations, which we denote as the \textit{exact problem}. The corresponding discrete Petrov-Galerkin  approximation is called the \textit{truth} problem, as it is common in the RB setting. We assume that the solution to the exact problem can be approximated arbitrarily well by the discrete solution of the truth problem. We then neglect the corresponding approximation error. 

\subsection{\textbf{Space-Time formulation}}
\label{sec:2.1}
Let $\Omega \subset \mathbb{R}^{d}$ be the spatial domain and $\mu \in \parset \subset \mathbb{R}^{p}$, where $\parset$ is a compact parameter set. Let  $V \subset H^{1}(\Omega) $  be a separable Hilbert space and $H:=L^{2}(\Omega)$. We denote by $ \langle \cdot,\cdot \rangle_{{V}}$, $\langle \cdot,\cdot \rangle_{H} $ and $ \lVert \cdot \rVert_{V}$, $\lVert  \cdot \rVert_{H}$ corresponding inner products and induced norms, respectively. To $V$ and $H$ we associate the Gelfand triple $V\hookrightarrow H \hookrightarrow V'$
 with duality pairing $\langle \cdot,\cdot \rangle_{V'V}$. The norm of $l\in V'$ is defined by $\Vert l\rVert_{V'}:=\underset{\psi \in V,\lVert \psi \rVert_{V}\neq 0}{\sup}\langle l,\psi \rangle_{V'V}/\lVert \psi \rVert_{V}$. We consider a parametrized quasilinear, bounded differential operator $A:V \times \parset \rightarrow V'$ with induced quasilinear form
\begin{align}\label{2.1.1}
\langle A(u,\mu),v \rangle_{V'V}:=a[u](u,v;\mu)=\int_{\Omega} \nu(u(x);\mu)\nabla u \cdot \nabla v\ dx,
\end{align} 
where the nonlinearity satisfies $\nu(\cdot;\mu) \in C^{1}(\mathbb{R})$. We assume that the forms \eqref{2.1.1} are strongly monotone on $V$  with monotonicity constants $m_{a}(\mu)>0$, i.e.
\begin{align}\label{2.1.2}
    a[v](v,v-w;\mu)-a[w](w,v-w;\mu) \geq m_{a}(\mu) \lVert v-w \rVert_{V}^{2}  \quad  \forall \, v,w\in V,
\end{align}
and Lipschitz continuous on $V$ with Lipschitz constants $L_{a}(\mu)>0$, i.e.
\begin{align}\label{2.1.3}
|a[u](u,v;\mu)-a[w](w,v;\mu)|\leq L_{a}(\mu) \lVert u-w \rVert_{V} \lVert v \rVert_{V} \quad  \forall \, u,w,v\in V. 
\end{align}
In addition, we assume that these conditions hold uniformly:
\begin{align}
m_{a}:=\underset{\mu \in \parset}{\inf}m_{a}(\mu)>0, \quad L_{a}:=\underset{\mu \in \parset}{\sup}L_{a}(\mu) < \infty.
\end{align}

For given $(g(\cdot;\mu),u_{o}) \in L^{2}(I;V') \times H$ we consider the quasilinear parabolic initial value problem of finding $u(t):=u(t;\mu) \in V, t \in I$ a.e. on the time interval $I=(0,T]$, such that 
\begin{align}\label{2.1.4}
  \dot{u}(t)+A(u(t),\mu)= g(t) \  \text{in} \ V', \ u(0)=u_{o} \ \text{in} \ H,
\end{align}
where $\dot{u}:=\frac{\partial u}{\partial t}$. We now define a space-time variational formulation of \eqref{2.1.4}. We use the trial space
\begin{align*}
\trialX:=W(0,T)=L^{2}(I;V)\cap H^{1}(I;V')=\{v \in L^{2}(I;V): v,\dot{v} \in  L^{2}(I;V') \}
\end{align*}
with the norm $\lVert w \rVert_{\trialX}^{2}:=\lVert \dot{w} \rVert_{L^{2}(I;V')}^{2}+\lVert w \rVert_{L^{2}(I;V)}^{2}$, and the test space  $\testY:=L^{2}(I;V) \times H$ with the norm $\lVert v \rVert_{\testY}^{2}:=\lVert v^{(1)} \rVert_{L^{2}(I;V)}^{2}+\lVert v^{(2)} \rVert_{H}^{2}$ for $v:=(v^{(1)},v^{(2)})$. The weak formulation of problem \eqref{2.1.4} reads: find $u:=u(\mu) \in \trialX$ such that
\begin{align}\label{2.1.5}
    B[u](u,v;\mu)=F(v;\mu), \quad \forall \, v\in \testY,
 \end{align}
where
 \begin{align}\label{2.1.6}
    B[u](u,v;\mu):=& \int_{I} \langle \dot{u},v^{(1)} \rangle_{V'V}+ a[u](u,v^{(1)};\mu)dt+\langle u(0),v^{(2)} \rangle_{H}, \ \text{and} \\
    F(v;\mu):=& \int_{I}\langle g(\mu),v^{(1)}\rangle_{V'V}dt+\langle u_{o},v^{(2)}\rangle_{H}.   
 \end{align}
Since $\trialX \hookrightarrow C(I;H)$, the initial value $u(0)$ is well-defined in $H$ \cite{zeidler2013linear}. We note that \eqref{2.1.2} implies coercivity of the quasilinear form $a[\cdot](\cdot,\cdot)$ and \eqref{2.1.3} implies hemicontinuity, i.e. the continuity of the mapping $s \rightarrow \langle A(u+sw),v \rangle_{V'V}$ for $s \in [0,1]$ and all $u,w,v \in V$. All together, the well-posedness of problem \eqref{2.1.5} follows, so that \eqref{2.1.5} admits a unique solution $u \in \trialX$, see e.g. \cite[Theorem 30.A]{zeidler2013nonlinear}.

\subsection{\textbf{Petrov-Galerkin Truth Approximation}}
\label{sec:2.2}
 From here onwards we omit the dependence on $\mu$ wherever appropriate. For the temporal discretization of \eqref{2.1.5} we use the time grid $0=t^{0}<t^{1}<...<t^{K}=T$ and set $I^{k}=(t^{k-1},t^{k}]$ for $k=1,...,K$. We set $\triangle t^{k}=t^{k}-t^{k-1}$ and define $\triangle t:=\max_{1\leq k \leq K} \triangle t^{k}$. For the spatial discretization we set $V_{h}=\text{span}\{\phi_{1},...,\phi_{\mathcal{N}_{h}}\}\subset V$, where $\dim {V_{h}}=\mathcal{N}_{h}$ and $h$ denotes the spatial discretization parameter. The functions $\phi_{i}$ will be defined in the numerical examples. With $\delta:=(\triangle t,h)$ we introduce the discrete trial space 
\begin{align*}
\trialXd:=\{u_{\delta} \in C^{0}(I;V),\restr{u_{\delta}}{I^{k}} \in \mathcal{P}_{1}(I^{k},V_{h}),\ k=1,...,K \} \subset \trialX 
\end{align*}
and the discrete test space 
\begin{align*}
\testYd:=\{v_{\delta}\in L^{2}(I;V),\restr{v_{\delta}}{I^{k}} \in \mathcal{P}_{0}(I^{k},V_{h}),\ k=1,...,K \}\times V_{h} \subset \testY.    
\end{align*}
With these choices of spaces the fully discrete truth approximation problem reads: find $u_{\delta}:=u_{\delta}(\mu) \in \trialXd$, such that $u^{0}_{\delta}:=u_{\delta}(0)=P_{H}^{h}u_{o}$ and
\begin{align}\label{2.2.0}
    B[u_{\delta}](u_{\delta},v_{\delta};\mu)=F(v_{\delta};\mu) \quad \forall v_{\delta} \in \testYd,
\end{align}
where $P_{H}^{h}:H \rightarrow V_{h}$ denotes the $H$-orthogonal projection onto $V_{h}$. It follows as for \eqref{2.1.5} that problem \eqref{2.2.0} admits a unique solution $u_{\delta} \in \trialXd$. The Petrov-Galerkin space-time discrete formulation \eqref{2.2.0} can be interpreted as Crank-Nicolson time-stepping scheme. Indeed, since the test space $\testYd$ consists of piecewise constant polynomials in time, the problem can be solved via the following procedure for $k=1,...,K$:  
\begin{align}\label{2.2.1}
     \int_{I^{k}}\langle \dot{u}_{\delta},v_{h} \rangle_{V'V}+a[u_{\delta}](u_{\delta},v_{h};\mu)dt=\int_{I^{k}}\langle g(\mu),v_{h} \rangle_{V'V}dt \quad \forall v_{h} \in V_{h}.
\end{align}
 Since the trial space $\trialXd$ consists of piecewise linear and continuous polynomials in time with the values $u_{\delta}^{k}:=u_{\delta}(t^{k})$ and $u_{\delta}^{k-1}:=u_{\delta}(t^{k-1})$, we can represent $u_{\delta}$ on $I^{k}$ as the linear function 
\begin{align}\label{2.2.2}
    u_{\delta}(t)=\frac{1}{\vartriangle t^{k}}\{(t^{k}-t)u_{\delta}^{k-1}+(t-t^{k-1})u_{\delta}^{k}\}, \ t\in I^{k}.
\end{align}
We use the representation \eqref{2.2.2} in \eqref{2.2.1}, test \eqref{2.2.1} against the basis functions $\phi_{i} \in V_{h} \ (i=1,...,\mathcal{N}_{h})$ and use the trapezoidal quadrature rule for the approximation of the appearing integrals. In this way we obtain the Crank-Nicolson time-stepping scheme, which for $k=1,...,K$ reads 
\begin{align}\label{2.2.3}
    (u_{\delta}^{k}-u_{\delta}^{k-1},\phi_{i})_{H}+\frac{\vartriangle t^{k}}{2} \{ a[u_{\delta}^{k}](u_{\delta}^{k},\phi_{i};\mu)+ a[u_{\delta}^{k-1}](u_{\delta}^{k-1},\phi_{i};\mu)\}=\\ \nonumber
    =\frac{\vartriangle t^{k}}{2} \{\langle g(t^{k};\mu),\phi_{i}\rangle_{V'V}+\langle g(t^{k-1};\mu),\phi_{i}\rangle_{V'V}\}, \quad 1\leq i \leq \mathcal{N}_{h}.
\end{align}
Here we recall that the initial condition $u_{\delta}^{0}$ is obtained as an $H$-orthogonal projection of $u_{o}$ onto $V_{h}$. Given the ansatz $u_{\delta}^{k}=\sum_{i=1}^{\mathcal{N}_{h}} u_{i}^{k} \phi_{i}$ and defining $\mathbf{u}_{\delta}^{k}:=\{u_{i}^{k}\}_{i=1}^{\mathcal{N}_{h}} \in \mathbb{R}^{\mathcal{N}_{h}}$, the resulting nonlinear algebraic equations are then solved by applying Newton's method for finding the root $\mathbf{u}_{\delta}^{k}$ of
\begin{align}\label{2.2.4}
\mathbf{G}_{h}(\mathbf{u}_{\delta}^{k};\mu):&=\frac{1}{\vartriangle t^{k}}\mathbf{M}_{h}(\mathbf{u}_{\delta}^{k}-\mathbf{u}_{\delta}^{k-1})-\frac{1}{2}[\mathbf{g}_{h}^{k}(\mu)+\mathbf{g}_{h}^{k-1}(\mu)] \\ \nonumber
&+\frac{1}{2}[\mathbf{A}_{h}(\mathbf{u}_{\delta}^{k};\mu)\mathbf{u}_{\delta}^{k}+\mathbf{A}_{h}(\mathbf{u}_{\delta}^{k-1};\mu)\mathbf{u}_{\delta}^{k-1}],
\end{align}
where $\mathbf{M}_{h}:=\{\langle \phi_{i},\phi_{j} \rangle_{H}\}_{i,j=1}^{\mathcal{N}_{h}},  \mathbf{A}_{h}(\mathbf{u}_{\delta}^{k};\mu):=\{a[u_{\delta}^{k}](\phi_{i},\phi_{j};\mu)\}_{i,j=1}^{\mathcal{N}_{h}}\in \mathbb{R}^{\mathcal{N}_{h} \times \mathcal{N}_{h}}$ and $\mathbf{g}_{h}^{k}(\mu):=\{\langle g(t^{k};\mu),\phi_{i}\rangle_{V'V}\}_{i=1}^{\mathcal{N}_{h}}\in \mathbb{R}^{\mathcal{N}_{h}}$. The initial condition for \eqref{2.2.4} is given by $\mathbf{u}_{\delta}^{0}:=\{(u_{o},\phi_{i})_{H}\}_{i=1}^{\mathcal{N}_{h}}\in \mathbb{R}^{\mathcal{N}_{h}}$. The strong monotonicity of the quasilinear form $\eqref{2.1.2}$ guarantees that the equation \eqref{2.2.4} admits a unique root $\mathbf{u}_{\delta}^{k}$ for every parameter $\mu \in \parset$.

The Newton's iteration for finding a root of \eqref{2.2.4} reads: starting with $\mathbf{u}_{\delta}^{k,(0)}$, for $z=0,1,...$ solve the linear system
\begin{align}\label{2.2.4b}
    \mathbf{J}_{h}(\mathbf{u}_{\delta}^{k,(z)};\mu)\delta \mathbf{u}_{\delta}^{k,(z)}=-\mathbf{G}_{h}(\mathbf{u}_{\delta}^{k,(z)};\mu)
\end{align}
to obtain $\delta \mathbf{u}_{\delta}^{k,(z)}$, and then update the solution $\mathbf{u}_{\delta}^{k,(z+1)}:=\mathbf{u}_{\delta}^{k,(z)}+\delta \mathbf{u}_{\delta}^{k,(z)}$. The system Jacobian matrix is given by
\begin{align}\label{2.2.5}
    \mathbf{J}_{h}(\mathbf{u}_{\delta}^{k};\mu)=\frac{1}{\vartriangle t^{k}}\mathbf{M}_{h}+\frac{1}{2}\mathbf{A}'_{h}(\mathbf{u}_{\delta}^{k};\mu),
\end{align}
where $\mathbf{A}'_{h}(\mathbf{u}_{\delta}^{k};\mu):=\{da[u_{\delta}^{k}](\phi_{i},\phi_{j};\mu)\}_{i,j=1}^{\mathcal{N}_{h}}\in \mathbb{R}^{\mathcal{N}_{h} \times \mathcal{N}_{h}}$. Here we assume the existence of the Fr\'echet derivative $A'(u;\mu) :V \times \parset \rightarrow V'$ of the nonlinear operator $A(u;\mu)$ for every parameter $\mu \in \parset$, which induces the corresponding bilinear form $\langle A'(u;\mu)v,w\rangle_{V'V}=da[u](v,w;\mu)$. We will specify it later for our examples. We note that $\mathbf{A}'_{h}(\mathbf{u}_{\delta}^{k};\mu)$ is positive definite, since $da[u](\cdot,\cdot;\mu)$ is coercive due to the strong monotonicity of $A$; therefore, the system \eqref{2.2.4b} admits a unique solution.

\section{The Reduced Basis method}
\label{sec:3}
In this section we introduce the reduced basis model and its numerical realization. Then we introduce our a-posteriori error bound and discuss its efficient evaluation. 

\subsection{\textbf{Empirical interpolation of the nonlinearity}}
\label{sec:3.1}

We use the Empirical Interpolation Method (EIM) \cite{barrault2004empirical} to ensure the availability of an affine decomposition for the quasilinear form $a[u_{\delta}^{k}](\cdot,\cdot;\mu)$ for every parameter  $\mu \in \parset$. We then need to find a parameter-separable (affine) counterpart $\nu_{M}(\cdot;\mu)$ of the nonlinear non-affine function $\nu(\cdot;\mu)$. For EIM nonlinearity approximation, we treat time as an additional parameter in the problem, thus we set $\mathbb{I}:=\{1,..., K\}$ as our discrete time set. We construct with the Algorithm \ref{alg:EIM algorithm} the nested sample sets $S_{M}^{\nu} \subset \parset$ and $\mathbb{I}_{M}^{\nu}\subset \mathbb{I}$, where $S_{M}^{\nu}:=\{ \mu_{1}^{\nu} \in \parset,...,\mu_{M}^{\nu} \in \parset \}$ and $\mathbb{I}_{M}^{\nu}:=\{ k_{1}^{M} \in \mathbb{I},...,k_{M}^{M} \in \mathbb{I} \}$, and associated approximation spaces $W_{M}^{\nu}:=\text{span}\{\nu(u_{\delta}^{k_{m}^{M}}(\cdot;\mu_{m}^{\nu});\mu_{m}^{\nu}),\ 1\leq m \leq M \}=\text{span}\{q_{1},...,q_{M}\}$. Algorithm \ref{alg:EIM algorithm} also provides the nested sets of interpolation points $T_{M}=\{x_{1}^{M},...,x_{M}^{M}\}, \ 1\leq M \leq M_{\text{max}}$. We build an affine approximation $\nu_{M}(u_{\delta}^{k}(x);\mu)$ of $\nu(u_{\delta}^{k}(x);\mu)$ for our time-marching scheme according to
\begin{align}\label{3.1.2}
\nu(u_{\delta}^{k}(x);\mu)\approx& \sum_{m=1}^{M}\varphi_{m}^{k}(\mu)q_{m}(x) \\ \nonumber
=& \sum_{m=1}^{M}(\mathbf{B}_{M}^{-1}\nu_{\mu}^{k})_{m}q_{m}(\hat{x}):= \nu_{M}(u_{\delta}^{k}(x);\mu),
\end{align}
where $\nu_{\mu}^{k}:=\{\nu(u_{\delta}^{k}(x_{m}^{M});\mu) \}_{m=1}^{M} \in \mathbb{R}^{M}$ and $\mathbf{B}_{M}\in \mathbb{R}^{M \times M}$ is the lower triangular interpolation matrix $(\mathbf{B}_{M})_{ij}=q_{j}(x_{i})$ with $(\mathbf{B}_{M})_{ii}=1 \ (i=1,...,M)$ by construction.  

\begin{algorithm}[H]
  \caption{: EIM algorithm}
  \label{alg:EIM algorithm}
  \hspace*{\algorithmicindent} \textbf{Input:} Stopping tolerance $\epsilon_{EIM}$, max. number of iterations $M_{\text{max}}$, parameter set $\parset$. \\
  \hspace*{\algorithmicindent} \textbf{Output:} Nested approximation spaces $\{W_{m}^{\nu}\}_{m=1}^{M}$,  nested interpolation points $\{T_{m}\}_{m=1}^{M}$. \\
  \begin{algorithmic}[1]
  \STATE{$ (\mu_{1}^{\nu},k_{1}^{M}) := \underset{(\mu,k) \in \parset \times \mathbb{I}}{\mathrm{arg}\max} \ \lVert \nu(u_{\delta}^{k}(\cdot);\mu)\rVert_{L^{\infty}(\Omega)} $}
  \STATE{$S_{1}^{\nu}\times \mathbb{I}_{1}^{\nu}:=  \{\mu^{\nu}_{1}\}\times \{k_{1}^{M}\}$}
  \STATE{$r_{1}(x):=\nu(u_{\delta}^{k_{m}^{M}}(x;\mu^{\nu}_{m});\mu_{m}^{\nu})$}
  \STATE{$x_{1}^{M}:=\underset{x \in \Omega}{\mathrm{arg}\max}|r_{1}(x)|, \quad q_{m}:=r_{1}/r_{1}(x_{1}^{M})$}
  \STATE{$T_{1} := \{x_{m}^{M}\}, \quad Q_{1}:=\{q_{1}\},\quad W_{1}^{\nu}:= \text{span}(Q_{1})$}
  \WHILE{$2\leq m \leq M_{\text{max}}$ and $\delta_{m}^{max}>\epsilon_{EIM}$} 
  \STATE{$ (\mu_{m}^{\nu},k_{m}^{M}) := \underset{(\mu,k) \in \parset \times \mathbb{I}}{\mathrm{arg}\max} \ \lVert \nu(u_{\delta}^{k}(\cdot);\mu)- \nu_{m}(u_{\delta}^{k}(\cdot);\mu) \rVert_{L^{\infty}(\Omega)} $}
  \STATE{$ \delta_{m}^{max} := \underset{(\mu,k) \in \parset \times \mathbb{I}}\max\ \lVert \nu(u_{\delta}^{k}(\cdot);\mu)- \nu_{m}(u_{\delta}^{k}(\cdot);\mu) \rVert_{L^{\infty}(\Omega)} $}
  \STATE{$S_{m}^{\nu}:= S_{m-1}^{\nu}\cup \{\mu^{\nu}_{m}\}, \quad \mathbb{I}_{m}^{\nu}:=\mathbb{I}_{m-1}^{\nu}\cup \{k_{m}^{M}\}$}
  \STATE{$r_{m}(x):=\nu(u_{\delta}^{k_{m}^{M}}(x;\mu^{\nu}_{m});\mu_{m}^{\nu})-\nu_{m}(u_{\delta}^{k_{m}^{M}}(x;\mu^{\nu}_{m});\mu_{m}^{\nu})$}
  \STATE{$x_{m}^{M}:=\underset{x \in \Omega}{\mathrm{arg}\max}|r_{m}(x)|, \quad q_{m}:=r_{m}/r_{m}(x_{m}^{M})$}
   \STATE{$T_{m} := T_{m-1}\cup \{x_{m}^{M}\}, \quad Q_{m}:=Q_{m-1}\cup\{q_{m}\}, \quad W_{m}^{\nu}:= \text{span}(Q_{m})$}
   \STATE{$m \leftarrow m+1$}
  \ENDWHILE
  \end{algorithmic}
\end{algorithm}

We then have the EIM approximation $\tilde{a}[\cdot](\cdot,\cdot;\mu)$ of the quasilinear form $a[\cdot](\cdot,\cdot;\mu)$, which admits the affine decomposition
 \begin{align}\label{3.1.3}
\tilde{a}[u_{\delta}^{k}](u_{\delta}^{k},v;\mu)=\sum_{m=1}^{M}\varphi_{m}^{k}(\mu) \tilde{a}_{m}(u_{\delta}^{k},v), \quad \tilde{a}_{m}(u_{\delta}^{k},v)=\int_{\Omega}q_{m} \nabla u_{\delta}^{k} \cdot \nabla v \ dx.
 \end{align}
We also assume the affine decomposition 
 \begin{align}\label{3.1.3b}
     \langle g(t^{k};\mu),v \rangle_{V'V}= \sum_{q=1}^{Q_{g}}\theta_{g,q}^{k}(\mu)\langle g_{q},v \rangle_{V'V}
 \end{align}
for the right-hand side, where $\theta_{g,q}^{k} :\parset \rightarrow \mathbb{R}$ are parameter-dependent functions and parameter-independent forms  $g_{q} : V \rightarrow \mathbb{R}$, $k=1,...,K$, $q=1,...,Q_{g}$. If \eqref{3.1.3b} is not available, the EIM procedure can be similarly applied.

\subsection{\textbf{Reduced basis approximation with the POD-Greedy method}}
\label{sec:3.2}

The idea of the reduced-basis approximation consists in replacing the ``truth" (high-dimensional) space $V_{h}$ in the definition of $\trialXd$ and $\testYd$ by a low-dimensional subspace $V_{N} \subset V_{h}$. With $V_{N}$ available we introduce the corresponding reduced trial space
\begin{align*}
\mathcal{X}_{\triangle t,N}:=\{u_{N}\in C^{0}(I;V),\restr{u_{N}}{I^{k}} \in \mathcal{P}_{1}(I^{k},V_{N}),\ k=1,...,K \}
\end{align*}
and the reduced test space
\begin{align*}
\mathcal{Y}_{\triangle t,N}:=\{v_{N}\in L^{2}(I;V),\restr{v_{N}}{I^{k}} \in \mathcal{P}_{0}(I^{k},V_{N}), \ k=1,...,K \} \times V_{N}.   
\end{align*}
We construct $V_{N}:=\text{span}\{\xi_{1},...,\xi_{N}\} \subset V_{h}$ by the POD-Greedy procedure in Algorithm \ref{alg:RB-Greedy algorithm}, compare e.g. \cite{haasdonk2008reduced}. In our setting, the POD-Greedy alogorithm constructs iteratively nested spaces $V_{n},\ 1\leq n \leq N$ using an a-posteriori error estimator $\bigtriangleup(Y;\mu)$ (see the next section for details on a-posteriori error analysis), which predicts the expected approximation error for a given parameter $\mu$ in the space $Y:=\mathcal{Y}_{\triangle t,n}$. We want the expected approximation error to be less than the prescribed tolerance $\varepsilon_{RB}$.  We initiate the algorithm with the choice of the initial basis vector  $\xi_{1}:=u_{\delta}^{0} / \lVert u_{\delta}^{0} \rVert_{V}$; this choice is motivated by the assumption in Proposition 1. The snapshots  $u_{\delta}(\mu)$ for the procedure are provided by the parametrized  ``truth" approximation \eqref{2.2.0}. Next we proceed as stated in the following Algorithm \ref{alg:RB-Greedy algorithm}.
\begin{algorithm}[H]
  \caption{: POD-Greedy algorithm}
  \label{alg:RB-Greedy algorithm}
  \hspace*{\algorithmicindent} \textbf{Input:} Tolerance $\varepsilon_{RB}$, max. number of iterations $N_{\text{max}}$, $V_{1}=\text{span}\{\xi_{1}\}$, parameter set $\parset$. \\
  \hspace*{\algorithmicindent} \textbf{Output:} RB spatial spaces $\{V_{n}\}_{n=1}^{N}$, RB trial spaces $\{\mathcal{X}_{\triangle t,n}\}_{n=1}^{N}$, RB test spaces $\{\mathcal{Y}_{\triangle t,n}\}_{n=1}^{N}$.
  \begin{algorithmic}[1]
   \WHILE { $2 \leq n \leq N_{\text{max}}$ and $\varepsilon_{n} :=\underset{\mu \in \parset_{train}}{\max} \bigtriangleup(\mathcal{Y}_{\triangle t,n},\mu)> \varepsilon_{RB}$}
  \STATE {$[\varepsilon_{n}, \mu_{n}] \leftarrow \underset{\mu \in \parset_{train}}{\mathrm{arg}\max}\bigtriangleup(\mathcal{Y}_{\triangle t,n-1},\mu)$}
  \STATE $e_{n}^{k}:=u_{\delta}^{k}(\mu_{n})-P_{V} u_{\delta}^{k}(\mu_{n}), \ k=1,...,K$
  \STATE $\xi_{n}:=\text{POD}_{1}(\{e_{n}^{k}\}_{k=1}^{K})$
  \STATE{$V_{n}:=V_{n-1} \bigoplus \text{span}\{\xi_{n}\}$}
  \STATE $\mathcal{X}_{\triangle t,n} \leftarrow \mathcal{X}_{\triangle t,n-1}, \quad \mathcal{Y}_{\triangle t,n} \leftarrow \mathcal{Y}_{\triangle t,n-1}$
  \STATE{$n \leftarrow n+1$}
  \ENDWHILE
  \end{algorithmic}
\end{algorithm}
\noindent
In Algorithm \ref{alg:RB-Greedy algorithm}, $P_{V}: V_{h} \rightarrow V_{n}$ denotes the $V$-orthogonal projection, and the operation $\text{POD}_{1}(\{e_{n}^{k}\}_{k=1}^{K})$ denotes the extraction of the dominant mode of  the Proper Orthogonal Decomposition (see, e.g. \cite{volkwein2013proper}). We also note that more modes can be extracted in every step of the algorithm: it reduces the offline computational time, but there is no guarantee that the produced basis will be of the smallest possible dimension.

The reduced-basis approximation of problem \eqref{2.2.0} reads: find $u_{N}:=u_{N}(\mu) \in \mathcal{X}_{\triangle t,N}$, such that  $u^{0}_{N}:=u_{N}(0)=P_{H}^{N}u_{o}$ and
\begin{align}\label{3.1.4}
    \tilde{B}[u_{N}](u_{N},v_{N};\mu)=\tilde{F}(v_{N};\mu) \quad \forall v_{N} \in \mathcal{Y}_{\triangle t,N},
\end{align}
where
\begin{align*}
     \tilde{B}[u_{N}](u_{N},v_{N};\mu)&=\int_{I} \langle \dot{u}_{N},v_{N}^{(1)} \rangle_{V'V}+ \tilde{a}[u_{N}](u_{N},v_{N}^{(1)};\mu)dt
     +\langle P_{H}^{N}u_{o},v_{N}^{(2)} \rangle_{H},\\
 \tilde{F}(v_{N};\mu):&= \int_{I}\langle g(\mu),v_{N}^{(1)}\rangle_{V'V}dt+\langle u_{\delta}^{0},v_{N}^{(2)}\rangle_{H},     
\end{align*}
 and $P_{H}^{N}: V_{h}\rightarrow V_{N}$ denotes the $H$-orthogonal projection onto $V_{N}$. For mathematical convenience, we assume that the EIM approximation $\tilde{a}[\cdot](\cdot,\cdot;\mu)$ is sufficiently accurate in the sense that the form $\tilde{a}[\cdot](\cdot,\cdot;\mu)$ is strongly monotone with monotonicity constant $\tilde{m}_{a}(\mu):=m_{a}(\mu)-\epsilon_{a}>0$, where $\epsilon_{a} \in \mathbb{R}_{+}$ is small enough and is related to the EIM approximation error. Then it follows as for \eqref{2.1.5} that the problem $\eqref{3.1.4}$ admits a unique solution $u_{N}(\mu) \in \mathcal{X}_{\triangle t,N}$ for all $\mu \in \parset$. However, in the EIM practice it is difficult to check this property a-priori, so that arguing the well-posedness of the upcoming discrete systems \eqref{discrete_root} and \eqref{rb_newton} in general is not possible.
 
 The problem \eqref{3.1.4} can be interpreted as the reduced-basis approximation of the Crank-Nicolson time-marching scheme with the EIM approximation of the nonlinearity, i.e.
\begin{align}\label{RB_scheme}
     (u_{N}^{k}-u_{N}^{k-1},v_{N}^{(1)})_{H}&+\frac{\vartriangle t^{k}}{2}\{ \tilde{a}[u_{N}^{k}](u_{N}^{k},v_{N}^{(1)};\mu)+\tilde{a}[u_{N}^{k-1}](u_{N}^{k-1},v_{N}^{(1)};\mu)\}\\ \nonumber
    &=\frac{\vartriangle t^{k}}{2} \{\langle g(t^{k};\mu),v_{N}^{(1)}\rangle_{V'V}+\langle g(t^{k-1};\mu),v_{N}^{(1)}\rangle_{V'V} \}, 
\end{align}
where the initial condition $u_{N}^{0}$ is obtained as an $H$-projection of $u_{\delta}^{0}$ onto $V_{N}$.  The resulting nonlinear algebraic equations are then solved with the RB counterpart of Newton's method by finding the root of
 \begin{align}\label{discrete_root}
\mathbf{G}_{N,M}(\mathbf{u}_{N}^{k};\mu)&=\frac{1}{\vartriangle t^{k}}\mathbf{M}_{N}(\mathbf{u}_{N}^{k}-\mathbf{u}_{N}^{k-1})-\frac{1}{2}[\mathbf{g}_{N}^{k}(\mu)+\mathbf{g}_{N}^{k-1}(\mu)] \\\nonumber
&+\frac{1}{2}[\mathbf{A}_{N,M}(\mu)\mathbf{u}_{N}^{k}+\mathbf{A}_{N,M}(\mu)\mathbf{u}_{N}^{k-1}],
 \end{align}
 where $\mathbf{M}_{N}:=\{\langle \xi_{i},\xi_{j} \rangle_{H}\}_{i,j=1}^{N}, \mathbf{A}_{N,M}(\mu):=\{\tilde{a}[u_{N}^{k}](\xi_{i},\xi_{j};\mu)\}_{i,j=1}^{N} \in \mathbb{R}^{N \times N}$ and $\mathbf{g}_{N}^{k}(\mu):=\{\langle g(t^{k};\mu),\xi_{i}\rangle_{V'V}\}_{i=1}^{N}\in \mathbb{R}^{N}$. The initial condition is given by $\mathbf{u}_{N}^{0}:=\{(u_{\delta}^{0},\xi_{i})_{H}\}_{i=1}^{N} \in \mathbb{R}^{N}$. The strong monotonicity of the quasilinear form $\eqref{3.1.3}$ guarantees that the equation \eqref{discrete_root} admits a unique root $\mathbf{u}_{N}^{k}$ for every parameter $\mu \in \parset$.
 
 The Newton's iteration for finding a root of \eqref{discrete_root} reads: starting with $\mathbf{u}_{N}^{k,(0)}$, for $z=0,1,...$ solve the linear system
\begin{align}\label{rb_newton}
    \mathbf{J}_{N,M}(\mathbf{u}_{N}^{k,(z)};\mu)\delta \mathbf{u}_{N}^{k,(z)}=-\mathbf{G}_{N,M}(\mathbf{u}_{N}^{k,(z)};\mu)
\end{align}
to obtain $\delta \mathbf{u}_{N}^{k,(z)}$, and then update the solution $\mathbf{u}_{N}^{k,(z+1)}:=\mathbf{u}_{N}^{k,(z)}+\delta \mathbf{u}_{N}^{k,(z)}$. The system Jacobian matrix is given by
\begin{align}\label{rb_newton_matrix}
    \mathbf{J}_{N,M}(\mathbf{u}_{N}^{k};\mu)=\frac{1}{\vartriangle t^{k}}\mathbf{M}_{N}+\frac{1}{2}\mathbf{A}'_{N,M}(\mathbf{u}_{N}^{k};\mu).
\end{align}
If the mapping $\mu \mapsto \mathbf{A}'_{N,M}(\cdot;\mu)$ is bounded in $\mu \in \parset$, then for $\vartriangle t^{k} \leq C(\parset)$, where $C(\parset)>0$ is some constant, the Jacobian matrix \eqref{rb_newton_matrix} is invertible. We will comment on the computation of the reduced parametrised counterpart $\mathbf{A}'_{N,M}(\mu):=\{d\tilde{a}[u_{N}^{k}](\xi_{i},\xi_{j};\mu)\}_{i,j=1}^{N} \in \mathbb{R}^{N \times N}$ of $\mathbf{A}'_{h}(\mathbf{u}_{\delta}^{k})$ in \eqref{rb_newton_matrix}. We have
\begin{align}\label{3.1.5}
    \tilde{a}[u_{N}^{k}](u_{N},\xi_{i};\mu)=\sum_{j=1}^{N}\sum_{m=1}^{M}\varphi_{m}^{k}(\mu)\tilde{a}_{m}(\xi_{j},\xi_{i})u_{N,j}^{k}, \quad 1\leq i \leq N.
\end{align}
With the EIM approximation of the nonlinearity it follows that
\begin{align}\label{3.1.6}
\sum_{s=1}^{M}(\mathbf{B}_{M})_{m,s}\varphi_{m,s}^{k}(\mu)&=\nu(u_{N}^{k}(x_{m}^{M};\mu);\mu), \quad 1\leq m\leq M \\ \nonumber
    &=\nu(\sum_{n=1}^{N}u_{N,n}^{k}\xi_{n}(x_{m}^{M});\mu),\quad 1\leq m\leq M.
\end{align}
Plugging \eqref{3.1.6} into \eqref{3.1.5} results in
\begin{align}\label{3.1.7}
   \tilde{a}[u_{N}^{k}](u_{N},\xi_{i};\mu)=\sum_{j=1}^{N}\sum_{m=1}^{M}\mathbf{D}_{i, m}^{N,M}(\mu)\nu(\sum_{n=1}^{N}u_{N,n}^{k}\xi_{n}(x_{m}^{M});\mu)u_{N,j}^{k} 
\end{align}
with $\mathbf{D}^{N,M}(\mu)=\mathbf{A}_{N,M}(\mu) (\mathbf{B}_{M})^{-1} \in \mathbb{R}^{N \times M}$. Taking the derivative of \eqref{3.1.7} with respect to the components $u_{N,j}^{k}(\mu), \ 1\leq j \leq N$ , we derive the formula for  $\mathbf{A}'_{N,M}(\mu)=\mathbf{A}_{N,M}(\mu)+\mathbf{E}_{N,M}(\mu)$, where 
\begin{align}\label{3.1.8}
   (\mathbf{E}_{N,M})_{i,j}=\sum_{s=1}^{N}u_{N,s}^{k} \sum_{m=1}^{M}\mathbf{D}_{i, m}^{N,M}(\mu)\partial_{1}\nu(u_{N}^{k}(x_{m}^{M});\mu)
\end{align}
We will give the exact form of $\partial_{1}\nu(u_{N}^{k}(x_{m}^{M});\mu)$ in the upcoming examples. 

The proposed reduced numerical scheme contains parameter-separable matrices and thus allows offline-online decomposition. The offline phase (model construction) depends on expensive high-dimensional finite element simulations and thus on $\mathcal{N}$, but should be performed only once. However, the assembling of all the high-dimensional parameter-dependent quantities is computationally simplified due to the affine dependence on the parameters \eqref{3.1.3},\eqref{3.1.3b}. In the online phase (RB model simulation) the computational complexity scales polynomially in $N$ and $M$, independently of $\mathcal{N}$ and thus is inexpensive. The operation count associated with each Newton update of the residual $\mathbf{G}_{N,M}(\mathbf{u}_{N}^{k,(z)})$ in the online phase is $\mathcal{O}(N^{2}Q_{a}+N^{2}+M^{2}+NQ_{f_{o}})$ and the Jacobian $\mathbf{J}_{N,M}(\mathbf{u}_{N,M}^{k,(z)})$ is assembled at cost $\mathcal{O}(MN^{3})$ with the dominant cost of assembling $\mathbf{E}_{N,M}(\mu)$, and then inverted at cost $\mathcal{O}(N^{3})$.

\subsection{\textbf{Reduced basis certification}}
\label{sec:3.3}
An important ingredient of the reduced basis methodology is the verification of the error (certification of the reduced basis method). In the present work we provide an a-posteriori error bound, based on the residual, which allows quick evaluation. We denote by $R(\cdot;\mu) \in \testY'$  the residual of the problem, defined naturally as: 
\begin{align}\label{3.2.1}
   R(v;\mu):=F(v;\mu)-\tilde{B}[u_{N}](u_{N},v;\mu)=\int_{I}\langle r(t;\mu),v \rangle_{V'V}dt  \quad \forall v \in \testYd.
\end{align}
We have the following
\begin{prop}[A-posteriori Error Bound]
 Let $m_{a}(\mu)>0$ be a monotonicity constant from \eqref{2.1.2} and assume that $u^{0}_{\delta} \in V_{N}$. Then the error $e(\mu)=u_{\delta}(\mu)-u_{N}(\mu)$ of the reduced basis approximation is bounded by
 \begin{align}\label{3.2.2}
\lVert e(\mu)\rVert_{\testY}\leq \frac{1}{m_{a}(\mu)}(\lVert R(\cdot;\mu)\rVert_{\testY'}+\delta_{M}(\mu)\lVert u_{N}(\mu) \rVert_{L^{2}(I;V)})=:\bigtriangleup_{N,M}(\mu),
 \end{align}
where
 \begin{align}\label{3.2.3}
\delta_{M}(\mu)=\sup_{t \in I}\sup_{x \in \Omega}| \nu_{M}(u_{N}(x,t);\mu) - \nu(u_{N}(x,t);\mu)|
\end{align}
denotes the approximation error of the nonlinearity.
\end{prop}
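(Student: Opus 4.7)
The plan is to perform a standard energy-type estimate by inserting the error (identified as an element of $\testY$) into the residual functional, using strong monotonicity to isolate the desired $m_{a}(\mu)\|e\|^{2}$ term and peeling off the EIM discrepancy as a lower-order perturbation. The crucial preliminary observation is that the assumption $u_{\delta}^{0}\in V_{N}$ implies $u_{N}(0)=P_{H}^{N}u_{\delta}^{0}=u_{\delta}^{0}$, so that $e(0)=0$ in $H$; consequently the natural test function $v=(e,e(0))=(e,0)\in\testY$ satisfies $\|v\|_{\testY}=\|e\|_{L^{2}(I;V)}$, and identifying $e\in\trialX$ with $(e,e(0))$ makes $\|e\|_{\testY}$ coincide with $\|e\|_{L^{2}(I;V)}$.

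First, I would insert $v=(e,0)$ into the residual definition \eqref{3.2.1}. Using the truth identity $F(v;\mu)=B[u_{\delta}](u_{\delta},v;\mu)$ (invoked, in line with the paper's convention, under the neglected truth-to-exact approximation error so that the identity extends to non-conforming test functions), I would rewrite
\begin{align*}
R((e,0);\mu)&=\int_{I}\langle\dot{e},e\rangle_{V'V}\,dt+\int_{I}\bigl(a[u_{\delta}](u_{\delta},e;\mu)-a[u_{N}](u_{N},e;\mu)\bigr)\,dt\\
&\quad+\int_{I}\bigl(a[u_{N}](u_{N},e;\mu)-\tilde{a}[u_{N}](u_{N},e;\mu)\bigr)\,dt,
\end{align*}
with the initial-value contribution $\langle u_{\delta}(0)-u_{N}(0),0\rangle_{H}$ vanishing because $e(0)=0$.

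Next, I would bound each of these three integrals. Because $e\in W(0,T)\hookrightarrow C(I;H)$, the first integral equals $\tfrac{1}{2}\|e(T)\|_{H}^{2}-\tfrac{1}{2}\|e(0)\|_{H}^{2}=\tfrac{1}{2}\|e(T)\|_{H}^{2}\geq 0$. The second integral is controlled from below by the strong monotonicity \eqref{2.1.2}, applied pointwise in $t$ and integrated over $I$, giving the lower bound $m_{a}(\mu)\|e\|_{L^{2}(I;V)}^{2}$. The third integral is the EIM remainder; rewriting its integrand as $\int_{\Omega}\bigl(\nu(u_{N})-\nu_{M}(u_{N})\bigr)\nabla u_{N}\cdot\nabla e\,dx$, extracting the $L^{\infty}$-bound $\delta_{M}(\mu)$ from \eqref{3.2.3}, and applying Cauchy--Schwarz in space and in time, yields the upper bound $\delta_{M}(\mu)\|u_{N}\|_{L^{2}(I;V)}\|e\|_{L^{2}(I;V)}$ in absolute value.

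Combining these three estimates with the dual-pairing bound $|R((e,0);\mu)|\leq\|R(\cdot;\mu)\|_{\testY'}\|(e,0)\|_{\testY}=\|R(\cdot;\mu)\|_{\testY'}\|e\|_{L^{2}(I;V)}$ gives
\begin{align*}
m_{a}(\mu)\|e\|_{L^{2}(I;V)}^{2}\leq\|R(\cdot;\mu)\|_{\testY'}\|e\|_{L^{2}(I;V)}+\delta_{M}(\mu)\|u_{N}\|_{L^{2}(I;V)}\|e\|_{L^{2}(I;V)},
\end{align*}
and cancelling one factor of $\|e\|_{L^{2}(I;V)}$ together with the identification $\|e\|_{\testY}=\|e\|_{L^{2}(I;V)}$ produces the claimed bound. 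The main obstacle, in my view, is the clean handling of the residual expansion: extending the truth identity to the non-conforming piecewise-linear-in-time test function $(e,0)$ requires either invoking the paper's assumption that the truth-to-exact discrepancy is negligible, or carefully absorbing a Galerkin consistency error intrinsic to the Petrov--Galerkin scheme; once this step is settled, the remaining monotonicity-plus-Cauchy--Schwarz argument is routine.
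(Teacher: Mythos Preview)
Your proposal is correct and follows essentially the same route as the paper: both arguments test the residual against $(e,0)$ (after noting $e(0)=0$), invoke the energy identity $\int_{I}\langle\dot e,e\rangle\,dt=\tfrac12\|e(T)\|_{H}^{2}\geq 0$, apply the strong monotonicity \eqref{2.1.2} pointwise in time, and split off the EIM discrepancy via Cauchy--Schwarz to arrive at \eqref{3.2.2}. Your explicit flagging of the nonconformity issue (that $e$ is piecewise linear in time and hence not in $\testYd$) is in fact more careful than the paper's proof, which silently relies on the same ``truth'' assumption stated at the beginning of Section~\ref{sec:2}; both resolve it identically by treating $B[u_{\delta}](u_{\delta},\cdot;\mu)=F(\cdot;\mu)$ as valid on all of $\testY$.
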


\begin{proof}
Since in the case $e=0$ there is nothing to show, we assume that $e \neq 0$. We have $u_{\delta}^{0} \in V_{N}$ and $\restr{P_{H}^{N}}{V_{N}}=Id$, therefore $u_{N}^{0}:=P_{H}^{N}u_{\delta}^{0}=u_{\delta}^{0}$. It implies that $\lVert e(0)\rVert_{H}=0$, $\lVert e \rVert_{\testY}=\lVert e \rVert_{L^{2}(I;V)}$ and $\lVert R(\cdot;\mu)\rVert_{\testY'}=\lVert R(\cdot;\mu)\rVert_{L^{2}(I;V')}$. First we obtain the following estimate by applying Cauchy-Schwartz inequality: 
\begin{align}\label{first_ineq}
\tilde{a}[u_{N}](u_{N},e;\mu)-a[u_{N}](u_{N},e;\mu)=\int_{\Omega}[\nu_{M}(u_{N};\mu)-\nu(u_{N};\mu)]\nabla u_{N}\cdot \nabla e \ dx \\ \nonumber
\leq \sup_{x \in \Omega}|\nu_{M}(u_{N}(x,\cdot);\mu) - \nu(u_{N}(x,\cdot);\mu)| \ \lVert u_{N} \rVert_{V}\lVert e \rVert_{V}.
\end{align}
Integrating \eqref{first_ineq} in $t$ and  applying the Cauchy-Schwartz inequality to the corresponding integral we get:
\begin{align*}
\int_{I} \tilde{a}[u_{N}](u_{N},e;\mu)-a[u_{N}](u_{N},e;\mu)dt \leq \delta_{M}(\mu) \lVert u_{N} \rVert_{L^{2}(I;V)}\lVert e\rVert_{\testY}.
\end{align*}
We then use the identity
\begin{align}\label{identity}
\int_{I} \langle \dot{e},e \rangle_{V'V}dt=  \frac{1}{2}\lVert e(T) \lVert_{H}^{2}-\frac{1}{2}\lVert e(0)\lVert_{H}^{2} 
\end{align}
together with the strong monotonicity condition \eqref{2.1.2} and the estimate above to derive the bound:
\begin{align*}
 m_{a}(\mu) \lVert e \rVert_{\testY}^{2}\leq & \int_{I}  a[u_{\delta}](u_{\delta},e;\mu)-a[u_{N}](u_{N},e;\mu) dt+\frac{1}{2}\lVert e(T) \rVert_{H}^{2}\\
= \int_{I} \langle \dot{e},e \rangle_{V'V} dt+& \int_{I} a[u_{\delta}](u_{\delta},e;\mu)-a[u_{N}](u_{N},e;\mu) dt+\frac{1}{2}\lVert e(0) \rVert_{H}^{2}  \\
= \int_{I} \langle \dot{e},e \rangle_{V'V}dt+&\int_{I} a[u_{\delta}](u_{\delta},e;\mu)-\tilde{a}[u_{N}](u_{N},e;\mu)dt+\lVert e(0)\rVert_{H}^{2}\\
+ &\int_{I} \tilde{a}[u_{N}](u_{N},e;\mu)-a[u_{N}](u_{N},e;\mu)dt  \\
&\leq  \lVert R(\cdot;\mu) \rVert_{\testY'} \lVert e \rVert_{\testY}+\delta_{M}(\mu) \lVert u_{N} \rVert_{L^{2}(I;V)} \lVert e \rVert_{\testY},   
\end{align*}
where we added and subtracted $\tilde{a}[u_{N}](u_{N},e;\mu)$ to get the definition of the residual \eqref{3.2.1}. Dividing both sides by $ \lVert e \rVert_{\testY} $ yields the result. 
\end{proof}

We note that the assumption $u_{\delta}^{0} \in V_{N}$ implies that $\lVert e(0)\rVert_{H}=0$. We can guarantee this by choosing $\xi_{1}:=u_{\delta}^{0} / \lVert u_{\delta}^{0} \rVert_{V}$ as the initial basis for $V_{N}$ in the POD-Greedy procedure.
 
The dual norm of the residual $\rVert R(\cdot;\mu)\lVert_{\testY'}$ in \eqref{3.2.2} is not available analytically, but by using $\testYd$ as the underlying test space, it becomes a computable quantity. The computation of $\rVert R(\cdot;\mu)\lVert_{\testY'}$ requires the knowledge of its Riesz representer $v_{\delta,R}(\mu) \in \testYd $. Thanks to the Riesz representation theorem, on the discrete level it can be obtained from the equation
\begin{align}\label{3.2.4}
(v_{\delta,R}(\mu),v_{\delta})_{\testY}=R(v_{\delta};\mu) \quad \forall v_{\delta}\in \testYd.
\end{align}
Since the test space $\testYd$ consists of piecewise constant polynomials  in time, the problem \eqref{3.2.4} can be solved via the time-marching procedure for $k=1,...,K$ as follows:  
\begin{align}\label{3.2.5}
 \int_{I^{k}}\langle v_{\delta,R}(t;\mu),v_{h} \rangle_{V}dt=\int_{I^{k}}\langle r(t;\mu),v \rangle_{V'V}dt \quad \forall v_{h} \in V_{h}.
 \end{align}
We note that $v_{R}^{k}(\mu):=\restr{v_{\delta,R}(\mu)}{I^{k}}$ is constant in time, hence the integration on the left-hand side of \eqref{3.2.5} is exact. For the right-hand side of \eqref{3.2.5} we represent $u_{N}(\mu) \in \mathcal{X}_{\triangle t, N}$ as the linear function  \eqref{2.2.2} on $I^{k}$ and use it as an input for the residual \eqref{3.2.1}. We then apply the trapezoidal quadrature rule for the approximate evaluation of the integral. The quadrature rule is chosen such that the quadrature error is of the size of the error of the truth Crank-Nicolson solution. We thus need to solve the following problems:
\begin{align}\label{3.2.6}
 \langle v_{R}^{k}(\mu),v_{h} \rangle_{V}= R^{k}(v_{h};\mu) \quad \forall v_{h} \in V_{h} \ (k=1,...,K),
\end{align}
where the right-hand side is given by
\begin{align}\label{3.2.7}
    R^{k}(v_{h};\mu)=\frac{1}{2}[\langle g(t^{k};\mu)+g(t^{k-1};\mu),v_{h}\rangle_{V'V}-\tilde{a}[u_{N}^{k}](u_{N}^{k},v_{h};\mu) \\ \nonumber
    -\tilde{a}[u_{N}^{k-1}](u_{N}^{k-1},v_{h};\mu)]  - \frac{1}{\triangle t^{k}}\langle u_{N}^{k}-u_{N}^{k-1},v_{h} \rangle_{H}. 
\end{align}
Therefore the computation of the Riesz representer leads to a sequence of $K$ uncoupled spatial problems in $V_{h}$. The parameter separability structure of the residual
\begin{align*}
R^{k}(v_{h};\mu)=\sum_{q=1}^{Q_{R}}\theta_{R,q}^{k}(\mu)R_{q}(v_{h})
\end{align*}
is transferred by the linearity of the Riesz isomorphism to the parameter separability of its Riesz representer $v^{k}_{R}(\mu)$ together with the parameter dependent functions $\theta_{R,q}^{k}: \parset \rightarrow \mathbb{R}$. Therefore, for $1\leq q \leq Q_{R}$ we have
\begin{align}\label{3.2.8}
v^{k}_{R}(\mu)=\sum_{q=1}^{Q_{R}}\theta_{R,q}^{k}(\mu)v_{R,q} \ \text{with} \ (v_{R,q},v_{h})_{V}=R_{q}(v_{h}) \quad \forall v_{h} \in V_{h}.
\end{align}
Finally we state the formulas for the residual norm  as well as the spatio-temporal norm of $u_{N}$. Since $\restr{v_{\delta,R}(\mu)}{I^{k}}$ is constant in time, the integration on $I^{k}$ is exact and we can compute the spatio-temporal norm of $v_{\delta,R}(\mu)$ as follows:
\begin{align*}
\lVert v_{\delta,R}(\mu) \rVert_{\testY}^{2}=\sum_{k=1}^{K} \triangle t^{k}\lVert v_{R}^{k}(\mu) \rVert_{V}^{2}=\sum_{k=1}^{K} \triangle t^{k}\Theta^{k}_{R}(\mu)^{T}\mathbf{G}_{R}\Theta^{k}_{R}(\mu),
\end{align*}
where $\mathbf{G}_{R}:=[\langle v_{R,q},v_{R,q'}\rangle]_{q,q'=1}^{Q_{R}}\in \mathbb{R}^{Q_{R}\times Q_{R}}$ and ${\Theta}^{k}_{R}(\mu):=[\theta_{R,q}^{k}(\mu)]_{q=1}^{Q_{R}}\in \mathbb{R}^{Q_{R}}$. The isometry of the Riesz isomorphism implies that $\lVert R(\cdot;\mu) \rVert_{\testY'}=\lVert v_{\delta,R}(\mu) \rVert_{\testY}$. Since $\restr{u_{N}(\mu)}{I^{k}}$ is a linear function in time, the trapezoidal quadrature rule on $I^{k}$ is exact. We then can compute the spatio-temporal norm $\lVert u_{N} \rVert_{\testY}$ of $u_{N} \in  \mathcal{X}_{\triangle t, N}$ according to
\begin{align*}
 \lVert u_{N} \rVert_{\testY}^{2}&= \sum_{k=1}^{K} \frac{\vartriangle t^{k}}{2}(\lVert u_{N}^{k} \rVert_{V}^{2}+\lVert u_{N}^{k-1} \rVert_{V}^{2})+\lVert u_{N}^{0} \rVert_{H}^{2} \\
& =\sum_{k=1}^{K} \frac {\vartriangle t^{k}}{2}[\mathbf{u}_{N}^{k \ T} \mathbf{K}_{N} \mathbf{u}_{N}^{k}+\mathbf{u}_{N}^{k-1 \ T}\mathbf{K}_{N} \mathbf{u}_{N}^{k-1}]+\mathbf{u}_{N}^{0 \ T} \mathbf{M}_{N} \mathbf{u}_{N}^{0},
\end{align*}
where $\mathbf{K}_{N}:=[\langle \xi_{i},\xi_{j} \rangle_{V}]_{i,j=1}^{N} \in \mathbb{R}^{N \times N}$. Since in our case the reduced basis is orthonormal in $V$, $\mathbf{K}_{N}$ is the identity matrix. The operation count in the online phase, associated with computation of the residual norm and the spatio-temporal norm on $\testY$ is correspondingly $\mathcal{O}(Q_{R}^{2}K)$ and $\mathcal{O}(N^{2}(K+1))$.

We note that our a-posteriori error bound takes into account the error of the nonlinearity approximation \eqref{3.2.3}. It is given by
 \begin{align}\label{3.2.9}
\delta_{M}(\mu) = \max_{k \in K}\max_{x \in \Omega}|\nu_{M}(u_{N}^{k}(x);\mu) - \nu(u_{N}^{k}(x);\mu)|.
\end{align}
Since the EIM approximation $\nu_{M}(\cdot;\mu)$ is constructed out of truth solutions, we assume that $N$ is chosen in such a way that $\nu_{M}(u_{N}^{k}(x);\mu)\approx \nu_{M}(u_{\delta}^{k}(x);\mu)$. We note that \eqref{3.2.9} requires the knowledge of $\nu(u_{N}^{k}(\mu);x;\mu)$ and thus one full evaluation of the nonlinearity for all $K$ time steps. Therefore the certification procedure is not completely mesh-independent.

\section{Examples and numerical results}
\label{sec:4}
In this section we consider examples of quasilinear parabolic PDEs with strongly monotone differential operators and apply the proposed reduced-basis techniques to these problems.  

\subsection{\textbf{1-D magnetoquasistatic problem: analysis}}
\label{sec:4.1}
For the first numerical example we choose a 1-D magnetoquasistatic approximation of Maxwell's equations (see, e.g. \cite{bachinger2005numerical,salon1995finite}). Let $d=1$, $\Omega=(0,1)$ and $V:=H^{1}_{0}(\Omega) \hookrightarrow L^{2}(\Omega)=:H$. The norm on $V$ is $\lVert u\rVert_{V}^{2}:=\langle u',u' \rangle_{L^{2}}$, which is indeed a norm due to Poincare-Friedrichs inequality. We use the time interval $I=(0,0.2]$ and the parameter set $\parset:=[1, 5.5] \subset \mathbb{R}$. For a parameter $\mu \in \parset$, we want to find $u:=u(\mu)$ which solves
\begin{equation}\label{4.1.1}
\begin{aligned}[c]
\dot{u}-(\nu(|u'|;\mu)u')'& = g\\
u(t,x)&= 0\\
u_{o}(x)&=0
\end{aligned}
\begin{aligned}[c]
\quad&\text{on} \ I\times\Omega,\\ 
\quad &\forall \ (t,x) \in I\times \partial{\Omega},\\ 
\quad &\forall \ x \in \Omega. 
\end{aligned}
\end{equation}
We here used $g(x,t):=12\sin(2 \pi x)\sin(2 \pi t)$ and define $\nu(s;\mu)=\exp{(\mu s^2)}+1$ as the reluctivity function. 

We consider the quasilinear form for the weak formulation \eqref{2.1.5}, which here is given by
\begin{align}\label{4.1.2}
    a[u](u,v;\mu)=\int_{\Omega}\nu(|u'|;\mu)u'v'dx.
\end{align}
If the function $\nu(\cdot;\mu)\cdot : \mathbb{R}_{0}^{+} \rightarrow \mathbb{R}_{0}^{+}$ is strongly monotone, i.e. if
\begin{align}\label{4.1.3}
 (\nu(s_{2};\mu)s_{2}-\nu(s_{1};\mu)s_{1})(s_{2}-s_{1})\geq m_{a}(\mu)(s_{2}-s_{1}), \quad \forall s_{2}, s_{1} \in \mathbb{R}_{0}^{+} 
\end{align}
holds, then \eqref{4.1.2} satisfies the strong monotonicity condition \eqref{2.1.2}. Indeed, we set $s_{1}=w',  s_{2}=v'$ and integrating we get
\begin{align*}
    a[v](v,v-w)-a[w](w,v-w)&=\int_{\Omega} (\nu(v';\mu)v'-\nu(w';\mu)v')(v'-w')dx \\ \nonumber
    &\geq m_{a}(\mu) \int_{\Omega} (v'-w')^{2}dx=m_{a}(\mu) \lVert v-w \rVert_{V}^{2}. 
\end{align*}
It is clear that the reluctivity function $\nu(s;\mu)$ in our example satisfies \eqref{4.1.3}. Furthermore, the monotonicity constant can be taken as $m_{a}=\underset{\mu \in \parset}{\text{inf}}\ \underset{s \in \mathbb{R}_{+}}{\text{inf}} \ \nu(s;\mu)$, hence we have $m_{a}=2$ for our problem and the constant is parameter-independent. We also note that continuity of  $\nu(\cdot;\mu)$ implies hemicontinuity of \eqref{4.1.2} for every parameter $\mu \in \parset$. Thus the weak formulation \eqref{2.1.5} of the PDE \eqref{4.1.1} admits a unique solution. 

We specify the bilinear form $\langle A'(u;\mu)v,w\rangle_{V'V}=da[u](v,w;\mu)$ induced by the Fr\'echet derivative $A'(u;\mu) :V \times \parset \rightarrow V'$ of the nonlinear operator $A(u;\mu)$. It is then used to compute the Jacobian matrix \eqref{2.2.5} for Newton method. In the present example we have
 \begin{align*}
da[u](v,w;\mu)= \int_{\Omega}\left(2\mu \ \nu'(| u'|;\mu) u' +\nu(| u'|;\mu)\right)v'w' \ dx.
\end{align*}
The derivative for the reduced-basis scheme in the formula \eqref{3.1.8}, thanks to the chain rule, is given by
\begin{align*}
\partial_{1}\nu(|u_{N}'^{k}(x_{m}^{M})|;\mu)=2\mu\nu'(|u_{N}'^{k}(x_{m}^{M})|;\mu) u_{N}'^{k}(x_{m}^{M})\xi_{j}'(x_{m}^{M}),  
\end{align*}
where all the indices are according to \eqref{3.1.8}. However it was sufficient to drop the term  $\mathbf{E}_{N,M}(\mu)$ in $\mathbf{A}'_{N,M}(\mu)$ for our numerical experiments. This then corresponds to an inexact Newton-like method, which was applied in the numerical computations.

\subsection{\textbf{1-D magnetoquasistatic problem: numerical results}}
\label{sec:4.2}

The truth approximation is performed by the Petrov-Galerkin scheme, which is introduced in section \ref{sec:2}, where $V_{h}$ is the finite element space, composed of piecewise linear and continuous functions, defined on the partition of $\bar{\Omega}$ into $100$ equal subintervals and $\mathcal{N}_{h}=98$ nodes (excluding Dirichlet boundary nodes). For the time discretization we divide the interval $I$ into $K=200$ subintervals of length $\triangle t=10^{-3}$. We solve the problem with the Crank-Nicolson scheme \eqref{2.2.3}, while applying  Newton's method, described in section \ref{sec:2.2} on each time step for the numerical computation of the time snapshots. We iterate the Newton's method unless the norm of the residual \eqref{2.2.4} is less than the tolerance level, which we set to $10^{-8}$. 

We generate the RB-EIM model as follows: we start from $\parset_{train}^{EIM} \subset \parset$  (a uniform grid of size 200) and compute truth solutions for each parameter in $\parset_{train}^{EIM}$ to approximate the nonlinearity $\nu$  with its EIM counterpart $\nu_{M}$. We set $M_{max}=8$ as the maximal dimension of the EIM approximation space. Next we run the POD-Greedy procedure with $M=M_{\text{max}}$ and obtain $N_{\text{max}}=5$ for $\varepsilon_{RB}=10^{-5}$, where $\parset_{train}$ is a uniform grid over $\parset$ of size 400.  For the POD-Greedy procedure and method certification we use the estimator \eqref{3.2.2}. We solve the problem with the reduced Crank-Nicolson scheme \eqref{RB_scheme}, while applying RB Newton's method, described in section \ref{sec:3.2} on each time step for the numerical computation of the time snapshots. We iterate the Newton's method unless the norm of the residual \eqref{discrete_root} is less than the tolerance level, which we set to $10^{-8}$. 

Next we introduce a test sample $\parset_{test} \subset \parset$ of size 200 (uniformly random sample from $\parset$), the maximum of the estimator $\max\bigtriangleup_{N,M}:=\underset{\mu \in \parset_{test}}{\max}\bigtriangleup_{N,M}(\mu)$, the ``truth norm" error and its maximum over the test sample
\begin{align*}
\varepsilon^{true}_{N,M}(\mu):=\lVert u_{\delta}(\mu)-u_{N}(\mu) \rVert_{\testY}, \quad \max \varepsilon^{true}_{N,M}:= \underset{\mu \in \parset_{test}}{\max} \varepsilon^{true}_{N,M}(\mu).
\end{align*}
Once the reduced-basis model is constructed ($N_{\text{max}}=7,M_{\text{max}}=8)$, we verify the convergence with $N$  of $\max \bigtriangleup_{N,M}$ and $\max \varepsilon^{true}_{N,M}$ on a test sample $\parset_{test}$ and plot in Fig.\ref{fig:1} the $N$-$M$ convergence curves for different values of $M$. We can see that the estimator in Fig.\ref{fig:1}(b) reaches the desired tolerance level $\varepsilon_{RB}=10^{-5}$ for $(N_{\text{max}},M_{\text{max}})=(5,8)$. 

Next we investigate the influence of the EIM approximation error in the estimation process. We can split the estimator \eqref{3.2.2} into two parts: the reduced-basis and the nonlinearity approximation error estimation contributions
\begin{align*}
\bigtriangleup^{RB}_{N,M}(\mu):=\frac{1}{m_{a}}{\lVert R(\cdot;\mu)\rVert_{\testY'}} \ \text{and} \ \bigtriangleup^{EI}_{N,M}(\mu):=\frac{\delta_{M}(\mu)}{m_{a}}\lVert u_{N}(\mu) \rVert_{\testY}.
\end{align*}
We then set 
\begin{align}\label{4.2.1}
\bigtriangleup^{RB}_{N,M}:=\underset{\mu \in \parset_{test}}{\max} \bigtriangleup^{RB}_{N,M}(\mu), \quad \bigtriangleup^{EI}_{N,M}:=\underset{\mu \in \parset_{test}}{\max} \bigtriangleup^{EI}_{N,M}(\mu).
\end{align}
In Fig.\ref{fig:2}(a) we plot $\bigtriangleup^{RB}_{N,M}$  and $\bigtriangleup^{EI}_{N,M}$  for $1 \leq N \leq 5$ and $M=4$, $M=8$: we can see that $M$ has nearly no influence on $\bigtriangleup^{RB}_{N,M}$, but we observe the ``plateau" in $\bigtriangleup^{EI}_{N,M}$, which limits the convergence of the estimator \eqref{3.2.2} with increasing $N$. The separation points, or ``knees", of the $N$-$M$-convergence curves then reflect a (close-to) balanced contribution of both error terms.
 
\begin{figure}
\centering
\subfloat[]{\includegraphics[width=0.5 \textwidth]{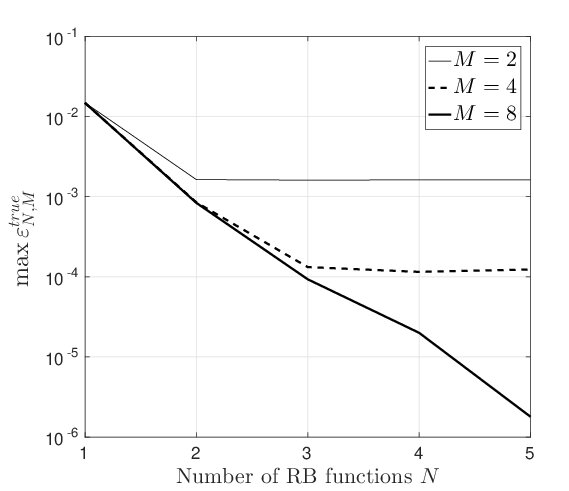}} 
\subfloat[]{\includegraphics[width=0.5\textwidth]{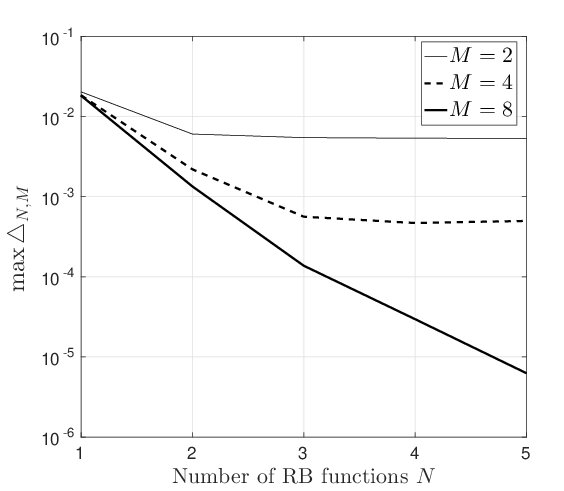}}
\caption{ (a): Convergence with $N$ of $\max \varepsilon^{true}_{N,M}$ for different values of $M$ on the test set, 1-D example. (b): Convergence with $N$ of $\max\bigtriangleup_{N,M}$ for different values of $M$ on the test set, 1-D example.}%
\label{fig:1}%
\end{figure}

\begin{figure}
\centering
\subfloat[]{\includegraphics[width=0.5\textwidth]{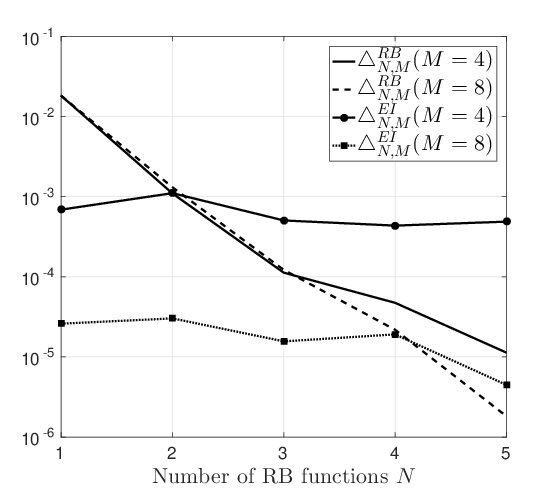}} 
\subfloat[]{\includegraphics[width=0.5 \textwidth]{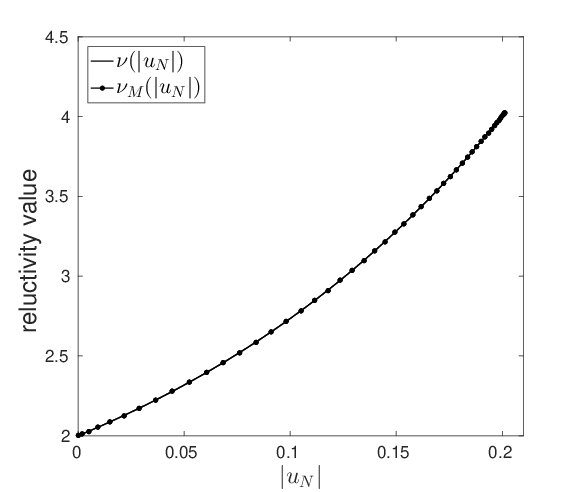}} 
\caption{ (a): The dependence of $\bigtriangleup^{RB}_{N,M}$ and $\bigtriangleup^{EI}_{N,M}$ contributions with $N$ for fixed values of $M$. (b): The reluctivity function $\nu(|u_{N}'(x)|;\mu)$ and its EI-approximation ($M=8$) $\nu_{M}(|u_{N}'(x)|;\mu)$ for the parameter $\mu=5.5$ at $t=0.2$ (b).}%
\label{fig:2}%
\end{figure}

In Table \ref{tab:1} we present, as a function of $N$ and $M$, the values of $\max\bigtriangleup_{N,M}$, $\bigtriangleup^{RB}_{N,M}$, $\bigtriangleup^{EI}_{N,M}$,  $\varepsilon^{true}_{N,M}$ and mean effectivities  $\bar{\eta}_{N,M}:=\frac{1}{|\parset_{test}|}\sum_{\mu \in \parset_{test}} \eta_{N,M}(\mu)$, where $\eta_{N,M}(\mu):=\bigtriangleup_{N,M}(\mu)/\lVert u_{\delta}(\mu)-u_{N}(\mu) \rVert_{\testY}$. We note that the tabulated $(N,M)$ values correspond roughly to the ``knees" of the $N$-$M$-convergence curves. We can see that the effectivities are lower bounded by 1 and are of moderate size, thus the error estimator is reliable and there is no significant overestimation of the true error. 

We then plot (see Fig. \ref{fig:2}(b)) the reluctivity function $\nu(|u_{N}'(x)|;\mu)$ and its EI approximation $\nu_{M}(|u_{N}'(x)|;\mu)$ for the parameter $\mu=5.5$ at $t=0.2$; we can see that there is no visible difference between the original function and its EIM counterpart. Although the problem at hand is merely chosen to illustrate the methodology, we report on the average CPU time for comparison. The finite element method takes $\approx 0.47$ sec to obtain the solution, and the RB method ($N_{\text{max}}, M_{\text{max}}$), which takes $\approx 0.08/0.10$ sec without and with the a-posteriori certification and results in the speed-up factor of 5.87/4.70\footnote{All the computations are performed in MATLAB on Intel Xeon(R) CPU E5-1650 v3, 3.5 GHz x 12 cores, 64 GB RAM}. The offline phase requires less than 10 minutes for our implementation. 
\begin{table}
\begin{tabular}{l l l l l l l l}
\hline\noalign{\smallskip}		
  $N$ & $M$ &$\max\bigtriangleup_{N,M}$ & $\bigtriangleup^{RB}_{N,M}$ & $\bigtriangleup^{EI}_{N,M}$ & $\max \varepsilon^{true}_{N,M}$ & $ \bar{\eta}_{N,M}$ \\
\noalign{\smallskip}\hline\noalign{\smallskip}
  2  & 2  & 6.10 E-03 & 5.60 E-03 & 7.60 E-04 & 1.60 E-03 & 4.00 \\
  3  & 4  & 5.62 E-04 & 5.05 E-04 & 1.12 E-04 & 1.32 E-04 & 5.82 \\
  5  & 8  & 6.25 E-06 & 4.47 E-06 & 1.81 E-06 & 1.79 E-06 & 4.58 \\
\noalign{\smallskip}\hline
\end{tabular}
\caption{Performance of the 1D RB-EIM magnetoquasistatic approximation of Maxwell's equations on the test set}
\label{tab:1} 
\end{table}

\subsection{\textbf{2-D magnetoquasistatic problem: analysis}}
\label{sec:4.3}

As second example we consider a 2-D magnetoquasistatic problem for modelling of eddy currents in a steel pipe\footnote{http://www.femm.info/wiki/TubeExample}. Let $\bar{\Omega}=\bar{\Omega}_{1}\bigcup \bar{\Omega}_{2}$ be a circular cross-section of the steel pipe with radius $r_{2}$, where $\Omega_{1}$ is the conducting domain (iron) and $\Omega_{2}$ is the non-conducting domain of radius $r_{1}$. The wire is represented by the part with the radius $r_{0}$ and the complementary part is the air gap (see Fig.\ref{fig:3}(a)). We assume that the magnetic reluctivity function and the electric conductivity function have different structure on conducting and non-conducting domains, respectively, i.e.
\begin{equation*}
\begin{aligned}[c]
  \nu(x,s) =
  \begin{cases}
                                   \nu_{1}(s), \  \text{for $x \in \Omega_{1}$}, \\
                                   \nu_{2},  \  \text{for $x \in \Omega_{2}$} \\
  \end{cases}\ \text{and}
\end{aligned}
\quad
\begin{aligned}[c]
  \sigma(x) =
  \begin{cases}
                                   \sigma_{1}>0, \  \text{for $x \in \Omega_{1}$}, \\
                                   \epsilon>0,  \  \text{for $x \in \Omega_{2}$}, \\
  \end{cases}
\end{aligned}
\end{equation*}
where $\nu_{2},\sigma_{1}>0$ denote constants. We assume that the reluctivity function satisfies 
\begin{align}\label{4.3.1}
   0 < \nu_{\text{LB}} \leq \nu(x,s)\leq \nu_{UB}, \quad  \forall x\in \Omega, \ s \in \mathbb{R}^{+}_{0},
\end{align}
where $\nu_{\text{LB}}$ and $\nu_{UB}$ are accessible constants. We note that the air-gap and the coils in the steel pipe are electrically non-conductive, i.e. $\sigma(\xi)=0$ for $\xi \in \Omega_{2}$. However, we introduce a regularization parameter $\epsilon=10^{-8}$ as a value of $\sigma$ for the non-conducting domain. This allows us to consider a pure parabolic problem instead of a parabolic-elliptic system with differential-algebraic structure (see, e.g. \cite{Stykel2017}). We set $\mu:=\sigma_{1}$  and define the parameter set $\parset=[5\cdot 10^6, 10^7]$ and the time interval $I=(0,0.02]$. We thus have a parametrized quasilinear parabolic equation
\begin{equation}\label{4.3.2}
\begin{aligned}[c]
\sigma(x;\mu)\dot{u}-\nabla \cdot(\nu(x,\lvert \nabla u\rvert)  \nabla  u)&=g \quad  \\
u(t,x)&= 0\\
u_{o}(x)&=0
\end{aligned}
\begin{aligned}[c]
\quad&\text{on} \ I\times\Omega,\\ 
\quad &\forall \ (t,x) \in I\times \partial{\Omega},\\ 
\quad &\forall \ x \in \Omega. 
\end{aligned}
\end{equation}
The right-hand side is the electric-flux density
\begin{equation*}
  g(x,t) =
  \begin{cases}
                                   \frac{I_{e}(t)}{2\pi r_{0}}, \  \text{for $x \in \Omega_{1}$}, \\
                                   0,  \  \text{for $x \in \Omega_{2}$}, \\
  \end{cases}
\end{equation*}
where $I_{e}(t)=100\cdot \sin(100\pi t)$ is the electric current. 

We consider the quasilinear form for the weak formulation \eqref{2.1.5}, which here is given by
\begin{align}\label{4.3.3}
    a[u](u,v;\mu)=\int_{\Omega}\nu(x,|\nabla u|;\mu)\nabla u \cdot \nabla v \ dx.
\end{align}
In practical applications, the nonlinear reluctivity function is often defined through magnetization curves or $|B|$-$|H|$ curves. The underlying physical properties of ferromagnetic materials determine the $|B|$-$|H|$ curve. These curves are naturally strongly monotone and, in practice, their analytical form is unknown. Instead, only a finite number of discrete points $(|H_{k}|,|B_{k}|), k=1,...,K_{c}$ with $|H_{k}|$,$|B_{k}|$  denoting the magnitude of the magnetic field (measured in ampere/meter) and magnetic flux (measured in tesla), is given from the real life measurements. In order to reconstruct a continuous, monotone $|B|$-$|H|$ curve, monotonicity-preserving interpolation with cubic splines is applied \cite{heise1994analysis}. We define a mapping $g_{1}:\mathbb{R}_{0}^{+}\rightarrow \mathbb{R}_{0}^{+}$ which determines the magnetization curve via $|B|=g_{1}(|H|)$. An example of a $|B|$-$|H|$ curve, based on the measurements of a ferromagnetic material, which is used in our problem, is given in Fig.\ref{fig:3}(b); the real life measurements were provided by \cite{Schops2020}. The mapping $s \mapsto \nu_{1}(s)s, s\in \mathbb{R}_{+}$ then denotes the inverse $g_{1}^{-1}$ of $g_{1}$ and thus also is strongly monotone. The nonlinear reluctivity function $\nu_{1}:\mathbb{R}_{0}^{+}\rightarrow \mathbb{R}_{0}^{+}$ then is given by $\nu_{1}(s):=g_{1}^{-1}(s)/s,s\in \mathbb{R}_{+}$; it is required that $\nu_{1} \in C^{1}(\mathbb{R}^{+}_{0};\mathbb{R}^{+})$ and the spline approximation technique guarantees this property. If $g_{1}^{-1}(s)=\nu_{1}(s)s$ satisfies the strong monotonicity condition \eqref{4.1.3}, then the mapping $\mathbf{s} \mapsto \nu_{1}(|\mathbf{s}|)\mathbf{s}, \mathbf{s} \in \mathbb{R}^{2}$ is strongly monotone with monotonicity constant $\nu_{LB}$ and Lipschitz continuous with Lipschitz constant $\nu_{UB}$. The mapping $\mathbf{s}\mapsto \nu_{2}\mathbf{s},  \mathbf{s} \in \mathbb{R}^{2}$ is linear, therefore $\mathbf{s}\mapsto \nu(x,|\mathbf{s}|)\mathbf{s}$ is strongly monotone for all $x \in \Omega$. The form \eqref{4.3.3} then is strongly monotone with the monotonicity constant $\nu_{LB}$ and Lipschitz continuous with the Lipschitz constant $3\nu_{UB}$ (see \cite{heise1994analysis} for the corresponding proofs). Hence the weak formulation \eqref{2.1.5} of the PDE \eqref{4.3.2} admits a unique solution.

\begin{figure}
\centering
\subfloat[]{\includegraphics[width=0.48\textwidth]{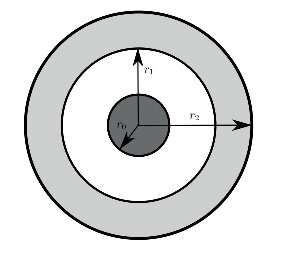}} 
\subfloat[]{\includegraphics[width=0.50 \textwidth]{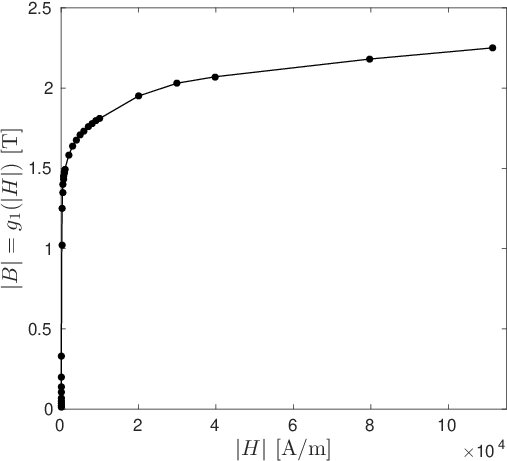}} 
\caption{ (a): Geometry of the computational domain: the wire (dark grey), the air gap (white), the iron (bright grey). (b): Example of a $|B|$-$|H|$ curve, approximated with cubic splines from measured data points.}%
\label{fig:3}%
\end{figure}

We specify the bilinear form $\langle A'(u;\mu)v,w\rangle_{V'V}=da[u](v,w;\mu)$ induced by the Fr\'echet derivative $A'(u;\mu) :V \times \parset \rightarrow V'$ of the nonlinear operator $A(u;\mu)$. It is then used to compute the Jacobian matrix \eqref{2.2.5} for Newton method. With
\begin{equation*}
n[u] =
  \begin{cases}
                                   \frac{\nabla u}{|\nabla u|}, \  \text{for} \  \nabla u \neq 0, \\
                                   0,  \  \text{for} \ \nabla u = 0, \\
  \end{cases}
\end{equation*}
we have
 \begin{align*}
da[u](v,w;\mu)=\int_{\Omega}\nu'(x,|\nabla u|;\mu)(n[u] \cdot  \nabla w)(\nabla u \cdot \nabla v)+\nu(x,|\nabla u|;\mu)\nabla v \cdot \nabla w \ dx,
\end{align*}
and the derivative for the reduced-basis scheme in the formula \eqref{3.1.8}, thanks to the chain rule, is given by
\begin{align*}
\partial_{1}\nu(x;|\nabla u_{N}^{k}(x_{m}^{M})|;\mu)=\nu'(x;|\nabla u_{N}^{k}(x_{m}^{M})|;\mu) n[u_{N}^{k}](x_{m}^{M})\cdot\nabla\xi_{j}(x_{m}^{M}),  
\end{align*}
where all the indices are according to \eqref{3.1.8}. 

In this example, the monotonicity constant $m_{a}(\mu)$ is not available analytically. As it was mentioned earlier in the discussion on $|B|$-$|H|$ curves, we can choose $\nu_{LB}>0$ as our monotonicity constant.  However, since for each parameter $\mu \in \parset$ there holds
\begin{align}\label{4.3.4}
m_{a}(\mu):=\underset{k \in K}{\min} \ \underset{x \in \Omega}{\min} \ \nu_{1}(|\nabla u_{N}^{k}(x)|;\mu)\geq \nu_{LB},
\end{align}
and the computation of \eqref{4.3.4} only requires one full evaluation of the nonlinearity, which already has been performed to evaluate \eqref{3.2.9}, we here use $m_{a}(\mu)$ as our constant for the estimation.

\subsection{\textbf{2-D magnetoquasistatic problem: numerical results}}
\label{sec:4.4}
The truth approximation is performed by the Petrov-Galerkin scheme, which is introduced in section \ref{sec:2}, where $V_{h}$ is the finite element space, composed of piecewise linear and continuous functions, defined on a triangle mesh containing $4374$ triangles and $\mathcal{N}_{h}=2107$ nodes (excluding Dirichlet boundary nodes). For the time discretization we divide the interval $I$ into $K=200$ subintervals of length $\triangle t=10^{-4}$. The nonlinear reluctivity function $\nu_{1}$ is reconstructed from the real $B-H$ measurements using monotonicity-preserving cubic spline interpolation and  $\nu_{2}$ value is chosen as the reluctivity of air. We then solve the problem with the Crank-Nicolson scheme \eqref{2.2.3}, while applying Newton's method, described in section \ref{sec:2.2}, on each time step for the numerical computation of the time snapshots. We iterate the Newton's method unless the norm of the residual \eqref{2.2.4} is less than the tolerance level, which we set to $10^{-8}$.

We generate the RB-EIM model as follows: we start from $\parset_{train}^{EIM} \subset \parset$  (a uniform grid of size 200) and compute truth solutions for each parameter in $\parset_{train}^{EIM}$ to approximate the nonlinearity $\nu_{1}$  with the EIM counterpart $\nu_{1}^{M}$. We set $M_{\text{max}}=44$ as the maximal dimension of the EIM approximation space. Next we run the POD-Greedy procedure with $M=M_{\text{max}}$ and obtain $N_{\text{max}}=14$ for $\varepsilon_{RB}=10^{-4}$, where $\parset_{train}$ is a uniform grid over $\parset$ of size 400.  For the POD-Greedy procedure and the method certification we use the estimator \eqref{3.2.2}. The monotonicity constant is evaluated as in \eqref{4.3.4}. We solve the problem with the reduced Crank-Nicolson scheme \eqref{RB_scheme}, while applying RB Newton's method, described in section \ref{sec:3.2}, on each time step for the numerical computation of the time snapshots. We iterate the Newton's method unless the norm of the residual \eqref{discrete_root} is less than the tolerance level, which we set to $10^{-8}$. 

Then we verify the convergence with $N$ of $\max \varepsilon^{true}_{N,M}$ (Fig. \ref{fig:4}(a)) and $\max \bigtriangleup_{N,M}$ (Fig. \ref{fig:4}(b)) on a test sample $\parset_{test}$ (a uniformly random sample of size 200) for different values of $M$. We can see  that the estimator in Fig.\ref{fig:3}(b) reaches the desired tolerance level $\varepsilon_{RB}=10^{-4}$ for $(N_{\text{max}},M_{\text{max}})=(14,44)$. We note that the convergence is not monotone at some points due to the EIM interpolation of the non-polynomial nonlinearity behind the problem. We can also see from Fig.\ref{fig:4}(a) that increasing $M$ above 20 has nearly no impact on the convergence of the truth norm error, but the estimator in Fig.\ref{fig:4}(b) still shows a considerable decrease with increasing $M$. Indeed, in Fig.\ref{fig:5}(a) we plot $\bigtriangleup^{RB}_{N,M}$  and $\bigtriangleup^{EI}_{N,M}$ as defined in \eqref{4.2.1} for $1 \leq N \leq 14$ and $M=20$, $M=44$: we can see that $M$ has nearly no influence on $\bigtriangleup^{RB}_{N,M}$, but we can observe the ``plateau" in $\bigtriangleup^{EI}_{N,M}$, which limits the convergence of the estimator \eqref{3.2.2} with increasing $N$. We also plot the values of $\bigtriangleup_{N,M}(\mu)$ and $\varepsilon^{true}_{N,M}(\mu)$ and the truth error for $(N_{\text{max}},M_{\text{max}})$ for every parameter $\mu \in \parset_{test}$ in Fig.\ref{fig:5}(b).

\begin{figure}
\centering
\subfloat[]{\includegraphics[width=0.49\textwidth]{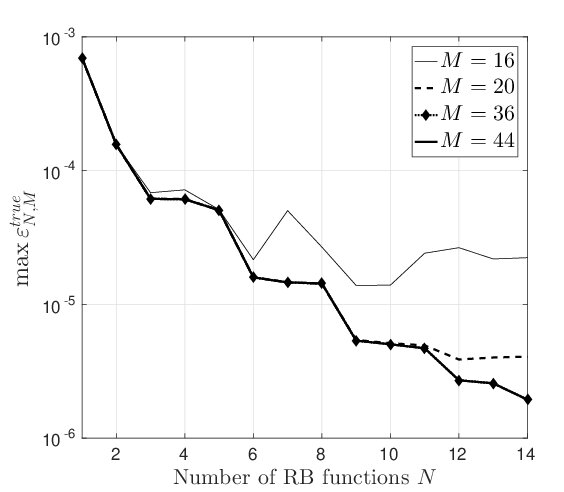}} 
\subfloat[]{\includegraphics[width=0.51 \textwidth]{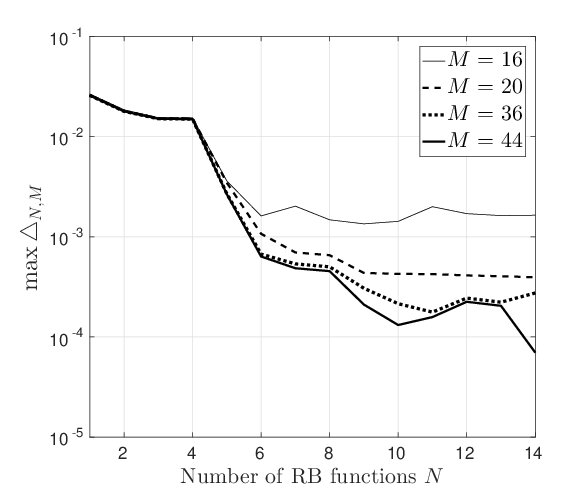}}
\caption{(a): Convergence with $N$ of $\max \varepsilon^{true}_{N,M}$ for different values of $M$ on the test set, 2-D example. (b): Convergence with $N$ of $\max\bigtriangleup_{N,M}$ for different values of $M$ on the test set, 2-D example.}%
\label{fig:4}%
\end{figure}

\begin{figure}
\centering
\subfloat[]{\includegraphics[width=0.5\textwidth]{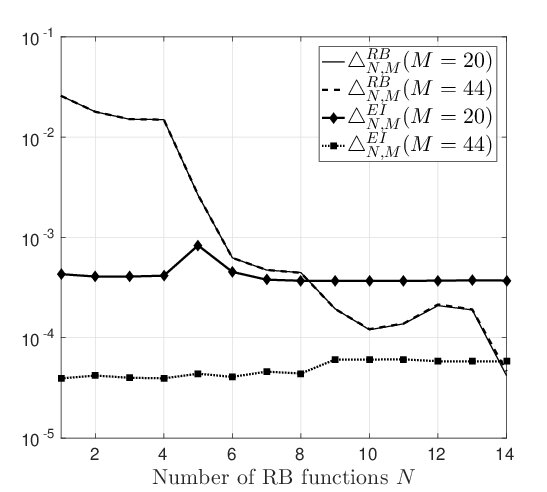}} 
\subfloat[]{\includegraphics[width=0.5 \textwidth]{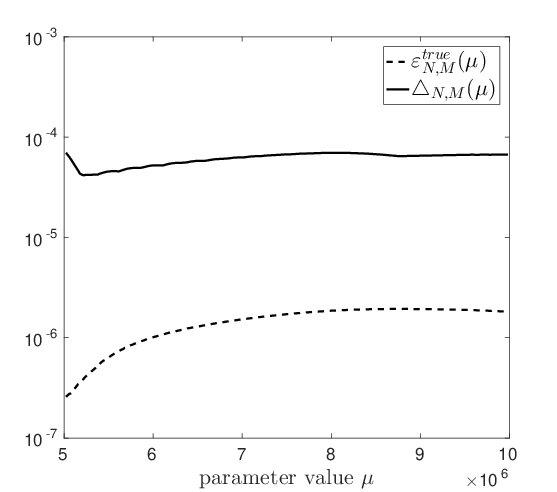}}
\caption{(a): The dependence of $\bigtriangleup^{RB}_{N,M}$ and $\bigtriangleup^{EI}_{N,M}$ contributions with $N$ for fixed values of $M$. (b): Values of $\varepsilon^{true}_{N,M}$ and $\max \vartriangle_{N,M}$ for $(N_{\text{max}},M_{\text{max}})=(14,44)$ on the test set.}
\label{fig:5}%
\end{figure}

\begin{figure}
\centering
\subfloat[]{\includegraphics[width=0.42\textwidth]{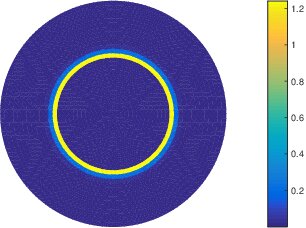}} 
\qquad
\subfloat[]{\includegraphics[width=0.42\textwidth]{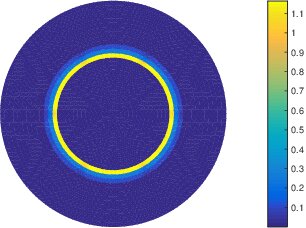}} \\
\subfloat[]{\includegraphics[width=0.42\textwidth]{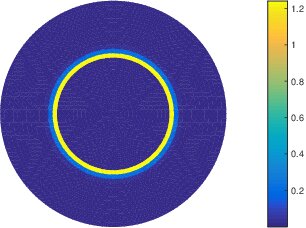}}  
\qquad
\subfloat[]{\includegraphics[width=0.42\textwidth]{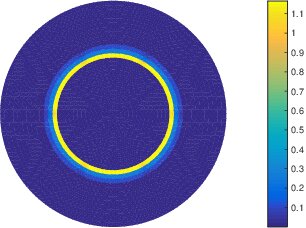}}\\
\caption{The truth magnetic flux density $|\nabla u_{\delta}|$ for $\mu=10^7$  at (a) $t=0.01$, (b) $t=0.02$. The reduced-basis magnetic flux density $|\nabla u_{N}|$ for $\mu=10^7$ at (c) $t=0.01$, (d) $t=0.02$.} 
\label{fig:6}
\end{figure}

In Table \ref{tab:2} we present, as a function of $N$ and $M$, the values of $\max\bigtriangleup_{N,M}$, $\bigtriangleup^{RB}_{N,M}$, $\bigtriangleup^{EI}_{N,M}$,  $\max \varepsilon^{true}_{N,M}$ and the mean effectivities $\bar{\eta}_{N,M}$. We note that the tabulated $(N,M)$ values correspond roughly to the ``knees" of the $N$-$M$-convergence curves (see example 1 for the terminology and definitions). We can see that the effectivities are lower bounded by 1, but the values are relatively large. We conject that this is related to the structure of the nonlinearity and the effectivities are proportional to $C\cdot \nu_{UB}/ \nu_{LB}$, where $C$ is some constant.

\begin{table}
\begin{tabular}{l l l l l l l l l}
\hline\noalign{\smallskip}		
  $N$ & $M$ &$\max\bigtriangleup_{N,M}$ & $\bigtriangleup^{RB}_{N,M}$ & $\bigtriangleup^{EI}_{N,M}$ & $\max\varepsilon^{true}_{N,M}$ & $ \bar{\eta}_{N,M}$  \\
\noalign{\smallskip}\hline\noalign{\smallskip}
  6  & 16 & 1.60 E-03 & 6.68 E-04 & 1.40 E-03 & 2.15 E-05 &  98.06 \\
  9  & 20 & 4.34 E-04 & 1.94 E-04 & 3.67 E-04 & 5.40 E-06 &  89.84  \\
  11 & 36 & 1.76 E-04 & 1.38 E-04 & 1.04 E-04 & 4.68 E-06 &  64.28  \\
  14 & 44 & 6.97 E-05 & 4.63 E-05 & 5.81 E-05 & 1.93 E-06 &  48.27  \\
\noalign{\smallskip}\hline
\end{tabular}
\caption{Performance of 2-D RB-EIM model on the test set}
\label{tab:2} 
\end{table}

In Fig.\ref{fig:6} we show the truth finite element magnetic flux density $|\nabla u_{\delta}(x,t,\mu)|$ and the corresponding reduced magnetic flux density $|\nabla u_{N}(x,t,\mu)|$ for $\mu=10^7$ and $t=0.01$ and $t=0.02$. We observe that flux densities look very similar. Next we compare the average CPU time required for both the finite element method, which takes $\approx 70$ sec to obtain the solution, and the RB method with $(N_{\text{max}}, M_{\text{max}})=(14,44)$, which takes $\approx 1.80/2.42$ without and with the a-posteriori certification and results in the speed-up factors (rounded) of 39 and 29, respectively. The offline phase requires the knowledge of the truth finite-element solutions for the EIM approximation step. Since 200 truth solutions were generated in the consecutive order, it takes $\approx$ 4 hours. The generation of these truth solutions could be performed in parallel which would reduce the offline time. The POD-Greedy sampling takes $\approx$ 40 minutes for our implementation.

\section{Conclusion}
\label{sec:5}
In this paper we propose the space-time reduced-basis method for quasilinear parabolic PDEs. We think that our space-time formulation combined with the chosen Petrov-Galerkin discretization provides an elegant approach to treat these kind of problems. We present a new a-posteriori error bound and use it for the reduced basis construction with the POD-Greedy procedure.  The developed methodology is applied to the magnetoquasistatic approximation of Maxwell's equations and numerical results confirm a good speed-up factor, which supports the validity of this approach. The reduced-basis methods developed in the paper will further be extended to treat more complicated industrial problems. It will further have a significant impact on the PASIROM project\footnote{http://www.pasirom.de/}, where the surrogate reduced-basis models are planned to be used in the optimization of electrical machines.

\section*{Acknowledgements}
Both authors acknowledge the support of the collaborative research project PASIROM funded
by the German Federal Ministry of Education and Research (BMBF) under grant no. 05M2018.





\end{document}